\documentclass[12pt]{article}
\usepackage{graphicx}%
\usepackage{multirow}%
\usepackage{amsmath,amssymb,amsfonts}%
\usepackage{amsthm}%
\usepackage{mathrsfs}%
\usepackage[title]{appendix}%
\usepackage{xcolor}%
\usepackage{textcomp}%
\usepackage{manyfoot}%
\usepackage{booktabs}%
\usepackage{algorithm}%
\usepackage{algorithmicx}%
\usepackage{algpseudocode}%
\usepackage{listings}%
\usepackage[numbers]{natbib}
\usepackage[colorlinks,citecolor=blue,pdfstartview=FitV,,urlcolor=blue,filecolor=blue]{hyperref}

\numberwithin{equation}{section}

\theoremstyle{plain}
\newtheorem{theorem}{Theorem}[section] 
\newtheorem{proposition}{Proposition}[section]
\newtheorem{definition}{Definition}[section]%
\newtheorem{corollary}{Corollary}[section]%
\newtheorem{assumption}{Assumption}[section]%
\newtheorem{lemma}{Lemma}[section]%
\newcommand{\chuhao}{\fontsize{19pt}{\baselineskip}\selectfont}

\date{}

\begin{document}
\title{\bf\color{black} \chuhao {On the Precision of the Spectral Profile Bound for the Mixing Time of Continuous State Markov Chains}}

\author{
	Elnaz Karimian Sichani \thanks{Corresponding author. Department of Mathematics and Statistics, University of Ottawa, Ottawa, ON, Canada. Email: \url{ekari037@uottawa.ca}}
	\and Aaron Smith \thanks{Department of Mathematics and Statistics, University of Ottawa, Ottawa, ON, Canada. Email: \url{asmi28@uottawa.ca}}
}

\maketitle	

	\begin{abstract}
		We investigate the \textit{sharpness} of the spectral profile bound presented by Goel et al. \cite{goel2006mixing} and Chen et al. \cite{chen2020fast} on the $L^{2}$ mixing time of Markov chains on continuous state spaces. We show that the bound provided by Chen et al. \cite{chen2020fast} is sharp up to a factor of $\log\log$ of the initial density. This result extends the findings of Kozma \cite{kozma2007precision}, which showed the analogous result for the original spectral profile bound of Goel et al. \cite{goel2006mixing} for Markov chains on finite state spaces. Kozma \cite{kozma2007precision} shows that the spectral profile bound is sharp up to a multiplicative factor of $\log(\log(\pi_{min}))$, where $\pi_{\min}$ is the smallest value of the probability mass function of the stationary distribution. We discuss the application of our primary finding to the comparison of Markov chains. Our main result can be used as a comparison bound, indicating that it is possible to compare chains even when only \textit{non-spectral} bounds exist for a known chain.

\par
{\small \vspace{9pt} \noindent\textbf{{ Keywords}:}{ Markov chains, Mixing time, Precision, Sensitivity, Continuous state space}}\\
\par
{\small \vspace{-3pt} \noindent\textbf{MSC 2010 subject classifications}: 60J05}\\
\vskip 0.2cm
 
	\end{abstract}

\section{Introduction}

Markov chains are popular both as models and as algorithms, in both contexts the \textit{mixing time} is often of great importance \cite{diaconis2009markov, diaconis2011mathematics}. Many of the most popular approaches to bounding the mixing time of a Markov chain are closely related to the spectrum of the underlying transition matrix \cite{levin2017markov}, both because they are easy to use and they are stable under natural changes to the underlying Markov chain \cite{kozma2007precision, levin2017markov}. This leads to the natural question: is it possible to find bounds that are both (almost) sharp and recognizably ``spectral'' or ``geometric'' in nature?

Many papers have investigated problems of a similar character, with the details depending on which notion of mixing must be approximated and what information the ``spectral'' or ``geometric'' bounds are allowed to use; some examples include  \cite{jh,kozma2007precision, hermon_peres_2018, hermon_kozma_2023, addario2018mixing}. The most similar to ours is \cite{kozma2007precision}, which shows that the \textit{spectral profile} of \cite{goel2006mixing} provides nearly-sharp bounds on the $L^{\infty}$ mixing time. More precisely, it shows that the spectral profile bound is sharp up to a multiplicative factor of $\log(\log(\pi_{\min}))$, where $\pi_{\min}$ is the smallest value of the PMF of the stationary distribution.

The main result of this paper, Theorem \ref{t2}, is an extension of Kozma's result \cite{kozma2007precision} from the original discrete-space spectral profile bound of \cite{goel2006mixing} to the continuous-space setting of \cite{chen2020fast}.
Of course the $L^{2}$ mixing time of a continuous-state Markov chain with non-zero holding probability is infinite, and the result of \cite{kozma2007precision} would be vacuous in a continuous-space setting.
We prove that Kozma's result \cite{kozma2007precision} is sharp with respect to a closely related metric (see Definition \ref{d2}), informally showing that this non-zero holding probability is the only thing that goes wrong. Our result replaces $\pi_{\min}$ with a ``warm-start'' constant that is equal to $\pi_{\min}$ in the discrete-space setting. This is a popular replacement in extending geometric bounds from discrete to continuous spaces \citep{vempala2005geometric}. 

As an auxiliary result, we note in Corollary \ref{application} how our main result can be used to get comparison bounds that are similar to the popular comparison bounds exposited in e.g. \cite{levin2017markov}. The main difference is that typical comparison bounds as in \cite{levin2017markov} require you to bound the spectrum of a ``nice'' kernel $K$ and allow you to use this to bound the spectrum of a ``difficult'' kernel $K'$;  Corollary \ref{application} points out that it is possible to compare chains even when you have only \textit{non-spectral} bounds on $K$.

\subsection{Related Work}

We mention some of the most closely-related work on how small changes in graph properties can affect the mixing times of associated random walks.

Hermon \cite{jh} explored the impact of bounded perturbations on the $L^{\infty}$ mixing times of simple random walks on graphs with uniformly bounded degrees. The study reveals that such perturbations can cause the mixing time to increase by a factor of $\Theta(\log \log n)$, where $n$ is the size of the graph. This highlights the sensitivity of the $L^{\infty}$ mixing time to changes in edge weights.

Hermon and Kozma \cite{hermon_kozma_2023}  investigated the robustness of the total variation mixing time in vertex-transitive graphs, particularly Cayley graphs, under small perturbations. Their findings indicate that for non-transitive graphs, the mixing time can vary significantly based on the starting point, especially after increasing certain edge weights.

Finally, Hermon and Peres \cite{hermon_peres_2018} examined the sensitivity of the total variation mixing times and the presence of a cutoff. The study shows that the total variation mixing time is not invariant under quasi-isometry, even for Cayley graphs, and can be substantially altered by bounded perturbations of edge weights or metric changes.

\subsection{Paper Guide}
In section \ref{sec2}, we establish our notation.  Section \ref{sec3} presents our main findings and includes an application. The final section \ref{sec5} is devoted to proofs of the theorems and lemmas presented in the paper.

\section{Notations and Basic Definitions}\label{sec2}

We set definitions used throughout the paper. Fix a Polish space $\mathcal{X}$ with associated Borel-$\sigma$ algebra $\mathcal{B}(\mathcal{X})$. Fix also a reversible transition probability kernel $P$ on $(\mathcal{X},\mathcal{B}(\mathcal{X}))$ with unique stationary probability measure $\pi$. Recall that a probability measure $\pi$ on $\mathcal{X}$ is called \textit{stationary} for the transition kernel $P$ if
$$
\int_{x \in \mathcal{X}}  P(x, A)\pi(dx)=\pi(A), \quad   \forall A \in \mathcal{B}(\mathcal{X})
$$
and $P$ is called \textit{reversible} if
$$
\int_{x \in A} \int_{y \in B}  \pi(dx)  P(x, d y) =\int_{x \in A} \int_{y \in B} \pi(dy) P(y, d x)  \quad  \forall A, B \in \mathcal{B}(\mathcal{X}).
$$ 

Define the $k$-step transition kernel $P^k$ recursively by \[P^{k+1}(x, A)=\int_{z \in \mathcal{X}} P(z, A) P^k(x, d z).\] For starting measure $\mu$, we define the density $h_{\mu, k}(x):= \frac{d(\mu P^{k})}{d\pi}(x)$ and consider only $\mu$ for which it exists.

We use the following definitions to measure the mixing rate of $P$:

\begin{definition}[$L^{p}$-distance]
	The $L^{p}$-distance between any two distributions $Q$ and $Q'$ is defined as: 
	$$		d_{p}(Q, Q')=\left(\int_{\mathcal{X}}\left\lvert \frac{dQ}{dQ'}(x)-1\right\rvert^{p} Q'(dx) \right)^{\frac{1}{p}},
	$$
	for $1\leq p<\infty$, and
	$$
	d_{\infty}(Q, Q')=\operatorname{esssup}_x\left|\frac{d Q}{d Q'}(x)-1\right|.
	$$
\end{definition}

\begin{definition}[ $L^{p}$ mixing time from an initial distribution ]
	The $L^{p}$ mixing time of a transition kernel $P$ from an initial distribution $\mu$ and with respect to the stationary distribution $\pi$ is:	
	$$
	\tau_{p}(\epsilon ; \mu, P)=\inf \left\{k \in \mathbb{N} \mid \, d_{p}(\mu P^{k}, \pi)\leq \epsilon\right\},
	$$
	where $\epsilon > 0$ is an error tolerance. 
\end{definition}

Denote by $\delta_{x}$ the usual Dirac-delta function at a point $x$. The  mixing time from an initial state $x$ is defined as follows:
$$
\tau_{p}(\epsilon; P)=\sup_{x}\tau_{p}(\epsilon ; \delta_{x}, P)=\inf \left\{k \in \mathbb{N} \mid \, \sup_{x \in \mathcal{X}}d_{p}(\delta_{x} P^{k}, \pi)\leq \epsilon\right\}.
$$

Note that, if $P(x,\{x\}) > 0$ for any point $x \in \mathcal{X}$ but $\pi$ has no atoms, then $\tau_{2}(\epsilon; P)$ is infinite. We are primarily interested in such chains. By a small abuse of standard notation, we define:

\begin{definition}[Exactly Half-Lazy]\label{as1} 
	A Markov chain with transition kernel $P$ is called \textit{exactly half-lazy} if it can be written in the form:
	\begin{equation}\label{eq4}
		P(x, A)=\frac{1}{2}\tilde{P}(x, A)+\frac{1}{2} \delta_{x}(A),
	\end{equation}
	for some transition kernel $\tilde{P}$ whose distributions $\tilde{P}(x,\cdot)$ all have densities $\tilde{p}(x,y)$ with respect to $\pi$. 
\end{definition}

For the remainder of the paper, we assume that $P$ is exactly half lazy and define $\tilde{P}$ as in Equation \eqref{eq4}.

\begin{definition}[ The $\delta$-approximate $L^{2}$ mixing time from a single starting point]\label{d2}   Consider an exactly half-lazy Markov chain $P$. For $x\in \mathcal{X}$, define
	
	\begin{equation}\label{e17}
		d_{2,\delta}(x, P,k)=
		\left\{\begin{array}{ll}
			d_{2}(\delta_{x} \tilde{P}\,P^{\lceil \frac{1}{2} k\rceil}, \pi)+\delta   & \quad P^{\lceil \frac{1}{2} k\rceil}(x,\{x\}) \leq \delta \\
			\infty & \quad 0 \leq \delta < P^{\lceil \frac{1}{2} k\rceil}(x,\{x\}).
		\end{array}\right.
	\end{equation}
	We then define the $\delta$-approximate $L^{2}$ mixing time of $P$ with respect to its stationary distribution $\pi$ from a single starting point $ x \in \mathcal{X}$,  $\tau_{2,\delta}(\epsilon; P)$, as        
	\begin{equation}\label{e18}
		\begin{aligned}
			\tau_{2,\delta} (\epsilon; P)
			&=	\inf \left\lbrace k \in \mathbb{N} \mid \, \sup_{x \in \mathcal{X}} d_{2,\delta}(x, P, k) \leq \epsilon \right\rbrace.
		\end{aligned}
	\end{equation}
\end{definition}

We say that a Markov chain with state space $\mathcal{X}$ and stationary distribution $\pi$ has a $\beta$-warm start if its initial distribution is somewhat spread out, as defined below:

\begin{definition}[Warm start]  A distribution $\mu$ is a $\beta$-warm start for a Markov chain  with stationary distribution $\pi$ if: 
	$$
	\sup _{A \in \mathcal{B}(\mathcal{X})} \frac{\mu(A)}{\pi(A)} \leq \beta.
	$$
\end{definition} 

The initial distribution $\mu$ is often referred to as $\beta$-warm start or a warm start with constant $\beta$. 

The mixing time of a Markov chain is closely related to its Dirichlet form:
	\begin{definition}[Dirichlet form] For a $\pi$-reversible transition kernel $P$, define the Dirichlet form on $L^2(\pi)$ by
	$$
	\mathcal{E}(f,f)=\frac{1}{2}\int_{\mathcal{X}}\int_{\mathcal{X}}\left(f(x)-f(y)\right)^{2} \pi(d x) P(x, d y)=\langle(I-P) f, f\rangle_{\pi},
	$$
	where $\langle\cdot, \cdot\rangle_{\pi}$ is the inner product on ${L}^{2}(\pi).$
	
\end{definition} 

We recall the generalization of spectral gap introduced in \cite[Definition 1.4]{goel2006mixing}:
\begin{definition}[Spectral gap]\label{d5}
	For  non-empty subset $A \subset \mathcal{X}$, the  spectral gap for the set $A$ is defined as
	\begin{equation}\label{e111}
		\gamma(A)=\inf _{f \in c_{0}^{+}( A)} \frac{\mathcal{E}(f, f)}{\operatorname{Var}_{\pi}(f)},
	\end{equation}
	where  $c_{0}^{+}(A)=\{f \in L^{2}(\pi): \operatorname{supp}(f) \subset  A, f \geq 0, f \neq \text { constant }\}$.
\end{definition}
This leads to the spectral profile, introduced in \cite{goel2006mixing}:
\begin{definition}[Spectral profile]\label{restspec}  For  $v \in [0, \infty)$, the spectral profile is defined
	\begin{equation}\label{e110}
		\Gamma(v)=\inf _{A \,:\,\pi(A) \in[0, v]} \gamma(A).
	\end{equation}
\end{definition}
The following is a specialization of Lemma 11 of \cite{chen2020fast} to our setting:

\begin{lemma}[Lemma 11, \cite{chen2020fast}]\label{l1}
	Consider a reversible, irreducible and exactly half-lazy continuous state Markov chain $P$ with the stationary distribution $\pi$. Given  a $\beta$-warm start $\mu$, and an error tolerance $\epsilon \in (0,1)$, the $L^{2}$ mixing time, $\tau_{2}\left(\epsilon ; \mu, P\right)$, is bounded as:
	\begin{equation}\label{e1}
		\tau_{2}\left(\epsilon ; \mu, P\right) \leq \int_{4 / \beta}^{8 / \epsilon^{2}} \frac{2\; d v}{v \Gamma(v)}.
	\end{equation}
\end{lemma}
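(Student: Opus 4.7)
The plan is to study the evolution of the squared $L^2$-distance $M(k) := \|h_{\mu,k} - 1\|_{L^2(\pi)}^2 = d_2(\mu P^k, \pi)^2$ and to derive a one-step recursion of the form $M(k) - M(k+1) \geq c_1\,\Gamma(4/M(k))\,M(k)$ for a universal constant $c_1$. Reversibility makes the Markov operator $T_P$ self-adjoint on $L^2(\pi)$, and exact half-laziness places its spectrum in $[0,1]$, so $T_P^2 \preceq T_P$ in the operator order. Writing $g_k := h_{\mu,k} - 1$, which has $\pi$-mean zero, this gives
\[
M(k) - M(k+1) = \langle (I - T_P^2) g_k, g_k\rangle_\pi \geq \langle (I - T_P) g_k, g_k\rangle_\pi = \mathcal{E}(h_{\mu,k}, h_{\mu,k}),
\]
the last identity using that the Dirichlet form is invariant under the addition of constants.

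The heart of the proof is to bound $\mathcal{E}(h_{\mu,k}, h_{\mu,k})$ from below by the spectral profile of Definition \ref{restspec}. I will use the truncation trick of Goel-Montenegro-Tetali: fix a threshold $c > 1$ and set $f := (h_{\mu,k} - c)_+$. The pointwise inequality $(a-b)^2 \geq ((a-c)_+ - (b-c)_+)^2$, verified by cases depending on which of $a,b$ exceed $c$, integrates against $\pi(dx)\,P(x,dy)$ to give $\mathcal{E}(h_{\mu,k} - c, h_{\mu,k} - c) \geq \mathcal{E}(f, f)$, and the left-hand side equals $\mathcal{E}(h_{\mu,k}, h_{\mu,k})$. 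Moreover, $f$ is supported in $S_c := \{h_{\mu,k} > c\}$, which has $\pi(S_c) \leq 1/c$ by Markov's inequality since $\int h_{\mu,k}\,d\pi = 1$. Definition \ref{d5} together with the monotonicity of $\Gamma$ in its argument then gives $\mathcal{E}(f, f) \geq \Gamma(1/c)\,\operatorname{Var}_\pi(f)$.

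I next calibrate $c$ so that $\operatorname{Var}_\pi(f)$ is a fixed fraction of $M(k)$. Using Cauchy-Schwarz $(\int f\,d\pi)^2 \leq \pi(S_c)\int f^2\,d\pi \leq (1/c)\int f^2\,d\pi$ to pass from $\int f^2\,d\pi$ to $\operatorname{Var}_\pi(f)$, together with a direct expansion relating $\int f^2\,d\pi$ to $M(k) = \operatorname{Var}_\pi(h_{\mu,k})$, a suitable choice $c \asymp 1/v$ with $v \asymp 1/M(k)$ yields $\operatorname{Var}_\pi(f) \geq c_2\,M(k)$. Chaining the three bounds produces the recursion $M(k) - M(k+1) \geq c_3\,\Gamma(4/M(k))\,M(k)$. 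The bookkeeping of constants through this truncation step is the one genuinely obstacle-like piece of work, since it is what ultimately forces the factors $2$, $4$, and $8$ in the final statement.

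Finally, I convert the recursion to the integral. Because $\Gamma$ is non-increasing, the function $M \mapsto 1/(M\,\Gamma(4/M))$ is non-increasing in $M$, so $\frac{M(k) - M(k+1)}{M(k)\,\Gamma(4/M(k))} \leq \int_{M(k+1)}^{M(k)} \frac{dM}{M\,\Gamma(4/M)}$. Summing from $k=0$ to $\tau - 1$ bounds $c_3\,\tau$ by $\int_{M(\tau)}^{M(0)} \frac{dM}{M\,\Gamma(4/M)}$, and the substitution $v = 4/M$, for which $dv/v = -dM/M$, rewrites this as $\int_{4/M(0)}^{4/M(\tau)} \frac{dv}{v\,\Gamma(v)}$. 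The warm-start assumption gives $d\mu/d\pi \leq \beta$ $\pi$-a.e., hence $M(0) = \|h_{\mu,0}\|_2^2 - 1 \leq \beta \int h_{\mu,0}\,d\pi - 1 = \beta - 1$, so the lower limit of integration is at most $4/\beta$; the stopping criterion $M(\tau) \leq \epsilon^2$ controls the upper limit, finishing the proof modulo the constants discussed in the previous paragraph.
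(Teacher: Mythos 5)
The paper does not prove this lemma at all: it is imported verbatim as a specialization of Lemma~11 of Chen et al.\ \cite{chen2020fast}, so there is no in-paper proof to compare against. Your argument is, in structure, exactly the Goel--Montenegro--Tetali argument that Chen et al.\ adapt to the continuous setting (one-step decay of $\|h_{\mu,k}-1\|_2^2$ via the Dirichlet form, the truncation $f=(h_{\mu,k}-c)_+$ with $\pi(\{h_{\mu,k}>c\})\le 1/c$, the spectral profile applied to $\operatorname{supp}(f)$, and the sum-to-integral conversion using monotonicity of $v\mapsto\Gamma(v)$), and it is correct as an outline. Two places where your sketch leaves genuine work, both of which you partly acknowledge: first, the calibration step producing $\operatorname{Var}_\pi(f)\ge c_2 M(k)$ is precisely Lemma~2.1 of \cite{goel2006mixing}, which requires a specific choice of $c$ (roughly $c=1+\tfrac{1}{2}\operatorname{Var}_\pi(h_{\mu,k})$) and a case analysis when $\operatorname{Var}_\pi(h_{\mu,k})$ is small; this is where the constants $4$ in $\Gamma(4/M)$ and the factor $2$ in the integrand actually come from, and without it the stated numerical form of \eqref{e1} is not established. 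Second, in the final summation you must stop at $k=\tau-1$ and use $M(\tau-1)>\epsilon^2$ rather than $M(\tau)\le\epsilon^2$, since $M(\tau)$ may overshoot far below $\epsilon^2$ and make $4/M(\tau)$ arbitrarily large; the resulting extra step is what the slack between $4/\epsilon^2$ and the stated upper limit $8/\epsilon^2$ is there to absorb. Neither issue is a wrong turn --- both are resolved in the cited source --- but they are the only parts of the lemma that are not purely formal, so a complete proof would need to carry them out.
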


\section{Precision of the Spectral Profile Bound in Continuous Setting}\label{sec3}

In this section, we extend Theorem 1 of \cite{kozma2007precision} to the continuous state setting.  We first establish an analogue to Lemma 3.1 of \cite{goel2006mixing} in the continuous state space. This result is then applied to our Lemma \ref{l2}, which is similar to Lemma \ref{l1} but from a single starting point. All proofs are deferred to Section \ref{sec5}.

\begin{lemma}\label{l2}
	Consider a reversible, irreducible, and exactly half-lazy continuous state Markov chain $P$ with the stationary distribution $\pi$. Assume that $\delta_{x} \tilde{P}$ is a $\beta$-warm start for every $x \in \mathcal{X}.$  Given error tolerances $\epsilon \in (0,1)$ and $ \delta \in [0, \epsilon),$ then
	\begin{eqnarray}
		\tau_{2,\delta}(\epsilon; P)  
		&\leq & \max \left(2\,\left(\int_{4 / \beta}^{8 / (\epsilon-\delta)^{2}} \frac{2\; d v}{v \Gamma(v)} \right), \; \frac{2\log(\frac{1}{\delta})}{\log({2})} +1 \right).
		\label{e7}
	\end{eqnarray}
\end{lemma}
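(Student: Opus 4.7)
The plan is to unpack Definition \ref{d2} into two conditions that must hold simultaneously and uniformly in $x$, to control each one independently, and to combine them via a max because both constraints must be met at time $k$. Specifically, $d_{2,\delta}(x,P,k) \le \epsilon$ is equivalent to the conjunction of \textbf{(A)} $P^{\lceil k/2 \rceil}(x,\{x\}) \le \delta$, so that the ``$\infty$'' branch of Definition \ref{d2} is avoided, and \textbf{(B)} $d_2(\delta_x \tilde P P^{\lceil k/2 \rceil}, \pi) \le \epsilon - \delta$, so that after adding $\delta$ the bound is at most $\epsilon$. A value of $k$ satisfying both is then an upper bound on $\tau_{2,\delta}(\epsilon;P)$.

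First I would handle (B) by a direct appeal to Lemma \ref{l1}. The hypothesis that $\delta_x \tilde P$ is a $\beta$-warm start for every $x$ is exactly what is needed to apply that lemma with initial distribution $\mu = \delta_x \tilde P$ and tolerance $\epsilon-\delta$, giving $\tau_2(\epsilon-\delta;\delta_x\tilde P, P) \le \int_{4/\beta}^{8/(\epsilon-\delta)^2} \frac{2\,dv}{v\Gamma(v)}$ uniformly in $x$. Requiring $\lceil k/2\rceil$ to exceed this integral produces the first entry of the max in Equation \eqref{e7}, namely $k \ge 2\int_{4/\beta}^{8/(\epsilon-\delta)^2}\frac{2\,dv}{v\Gamma(v)}$.

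Next I would handle (A) by computing $P^m(x,\{x\})$ directly from the exactly half-lazy decomposition $P=\tfrac12\tilde P+\tfrac12 I$. Conditioning on the binomial number $J_m$ of non-self-loop steps among $m$ applications of $P$ gives $P^m(x,\{x\}) = \sum_{j=0}^m \binom{m}{j} 2^{-m}\,\tilde P^j(x,\{x\})$. Because $\tilde P(y,\cdot)$ has a density with respect to $\pi$, which is non-atomic in the genuinely continuous setting, $\tilde P(y,\{x\})=0$ for every $y$, and by induction $\tilde P^j(x,\{x\})=0$ for all $j\ge 1$. Hence $P^m(x,\{x\})=2^{-m}$, and the condition $2^{-\lceil k/2\rceil}\le \delta$ rearranges to $\lceil k/2\rceil\ge \log(1/\delta)/\log 2$, which is guaranteed for any integer $k\ge 2\log(1/\delta)/\log 2 + 1$, producing the second entry of the max.

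The main point requiring care is the atom argument that pins down $P^m(x,\{x\})=2^{-m}$: the cleanest route is to observe that the $\beta$-warm-start hypothesis already forces $\tilde P(x,\{x\})\le \beta \pi(\{x\})$, so that non-atomicity of $\pi$ is really what drives the self-return computation. A secondary bookkeeping step is tracking the ceiling $\lceil k/2\rceil$ so that the $+1$ in the second entry and the doubling in the first entry together ensure that any integer $k$ at least $\max\bigl(2\int_{4/\beta}^{8/(\epsilon-\delta)^2}\frac{2\,dv}{v\Gamma(v)},\;2\log(1/\delta)/\log 2 + 1\bigr)$ simultaneously satisfies the integer constraints arising from (A) and (B). With those two pieces in hand, the lemma follows.
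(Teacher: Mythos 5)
Your proposal is correct and follows essentially the same route as the paper: unpack Definition \ref{d2} into the two uniform-in-$x$ conditions, apply Lemma \ref{l1} with $\mu=\delta_x\tilde P$ and tolerance $\epsilon-\delta$ for the $L^2$ part, and use the half-lazy decomposition to identify $P^{\lceil k/2\rceil}(x,\{x\})=2^{-\lceil k/2\rceil}$ for the holding-probability part (the paper extracts this via first-move conditioning rather than your binomial expansion, but the computation is the same). Your explicit remark that non-atomicity of $\pi$ is what makes the self-return probability exactly $2^{-m}$ is a point the paper leaves implicit, and is a welcome clarification rather than a deviation.
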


Throughout this paper,  the term ``$\rho$'' is used to refer to the right-hand side of inequality \eqref{e7}, and the symbol $\log$ is used to represent the natural logarithm. 

For $S \in \mathcal{B}(\mathcal{X})$ with $\pi(S) >0$, define a sub-stochastic kernel
\begin{equation*}
	\tilde{P}_{S}(x, B)= \tilde{P}(x,B \cap S), \quad \forall  B \in \mathcal{B}(\mathcal{X}).
\end{equation*}
Note that the kernels $\tilde{P}$ and $\tilde{P}_{S}$ are reversible with respect to $\pi$ and  ${\pi}_{\mid{S}}(B) \equiv \frac{{\pi}(B \cap S)}{{\pi}(S)}$, respectively. We make the following assumption on $\tilde{P}_{S}$:

\begin{assumption}\label{as2}
	For $S \in \mathcal{B}(\mathcal{X})$ with $\pi(S) >0$, we assume that the sub-stochastic kernel $\tilde{P}_{S}$ on $L^{2}({\pi}_{\mid{S}})$ is Hilbert-Schmidt.
	Assume that $L^{2}({\pi}_{\mid{S}})$ has an orthonormal basis of eigenfunctions $\left\{f_{i}\right\}_{i \geq 0}$ of  $\tilde{P}_{S}$, with real eigenvalues $\left\{\tilde{\beta}_{i}\right\}_{i \geq 0}$ satisfy $f_{0} \equiv 1$,  $0\leq \tilde{\beta}_{i} < 1$, $\tilde{\beta}_{i}\downarrow 0$ so that
	\begin{equation}\label{ascon}
		\int_{\mathcal{X}} \frac{d(\delta_x\tilde{P}_{S})}{d{\pi}_{\mid{S}}}(y) {f}_i\left(y\right) {\pi}_{\mid{S}}\left(d y\right)=\tilde{\beta}_i {f}_i(x),  \quad \forall  x \in \mathcal{X}.
	\end{equation}
\end{assumption}

In order to prove Theorem \ref{t2}, we then propose the following lemma, which is similar to Lemma 3.1 in \cite{goel2006mixing}. 

\begin{lemma}\label{l3} 
	Let $S \in \mathcal{B}(\mathcal{X})$ with $\pi(S) >0$ and  $k \in \mathbb{N} $. Let $\tilde{P}_{S}$ satisfy Assumption \ref{as2}, then
	\begin{equation}\label{rflo}
		\sup_{x \in \mathcal{X}}{h}_{\delta_{x} \tilde{P}, \lceil k/2 \rceil}(x) 
		\geq
		\frac{(1-\tilde{\gamma}(S))^{2k}}{\pi(S)},
	\end{equation}
	where $\tilde{\gamma}(S)$ is the spectral gap of $\tilde{P}$ for the set $S$.
\end{lemma}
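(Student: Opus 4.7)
My plan is to extract from Assumption~\ref{as2} a lower bound on the density of $\tilde{P}_S^n(x,\cdot)$ at $x$, transfer it to $\tilde{P}^n$ by measure domination, and assemble the pieces into a bound on $h_{\delta_x\tilde{P},\lceil k/2\rceil}(x)$ via a binomial expansion of $P^m$. The three ingredients are: (i) the eigenfunction expansion of $\tilde{P}_S^n$ in $L^2(\pi_{\mid S})$; (ii) the pointwise domination $\tilde{P}_S^n(x,\cdot)\leq \tilde{P}^n(x,\cdot)$ as measures (restricted-chain paths are a subset of full-chain paths); and (iii) the variational bound $\tilde{\beta}_0 \geq 1 - \tilde{\gamma}(S)$.

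I would first expand $P^m = \sum_{j=0}^m \binom{m}{j}\,2^{-m}\,\tilde{P}^j$, which is valid because $\tilde{P}$ commutes with $I$. Writing $\tilde{p}^{(n)}(x,y)$ and $\tilde{p}_S^{(n)}(x,y)$ for the densities of $\tilde{P}^n(x,\cdot)$ and $\tilde{P}_S^n(x,\cdot)$, respectively, with respect to $\pi$, this gives
\[
h_{\delta_x\tilde{P},m}(x) \;=\; \sum_{j=0}^m \binom{m}{j}\,2^{-m}\,\tilde{p}^{(j+1)}(x,x),
\]
and the kernel inequality yields $\tilde{p}^{(j+1)}(x,x) \geq \tilde{p}_S^{(j+1)}(x,x)$ for $x \in S$. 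Integrating over $S$ against $\pi$ and invoking the trace identity $\int_S \tilde{p}_S^{(n)}(x,x)\,\pi(dx) = \sum_i \tilde{\beta}_i^n \geq \tilde{\beta}_0^n$ (which follows from decomposing $\tilde{P}_S^n$ into its spectral components on $L^2(\pi_{\mid S})$), then summing the binomial series, I obtain
\[
\int_S h_{\delta_x\tilde{P},m}(x)\,\pi(dx) \;\geq\; \sum_{j=0}^m \binom{m}{j}\,2^{-m}\,\tilde{\beta}_0^{j+1} \;=\; \tilde{\beta}_0\Bigl(\tfrac{1+\tilde{\beta}_0}{2}\Bigr)^m.
\]
Dividing by $\pi(S)$ and replacing the average over $S$ by the supremum over $\mathcal{X}$ yields $\sup_x h_{\delta_x\tilde{P},m}(x) \geq \tilde{\beta}_0\bigl((1+\tilde{\beta}_0)/2\bigr)^m/\pi(S)$.

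To bring in the spectral gap, I would observe from Definition~\ref{d5} that any non-negative $f \in c_0^+(S)$ satisfies $\operatorname{Var}_\pi(f) \leq \|f\|_\pi^2$ and, because $f$ is supported on $S$, $\langle \tilde{P}f,f\rangle_\pi/\|f\|_\pi^2 = \langle \tilde{P}_S f,f\rangle_{\pi_{\mid S}}/\|f\|_{\pi_{\mid S}}^2 \leq \tilde{\beta}_0$; combining these and taking the infimum yields $\tilde{\gamma}(S) \geq 1 - \tilde{\beta}_0$. Choosing $m = \lceil k/2\rceil$, using $(1+\tilde{\beta}_0)/2 \geq \tilde{\beta}_0$ (valid since $\tilde{\beta}_0 \in [0,1)$), and noting that $\lceil k/2\rceil + 1 \leq 2k$ for all $k \geq 1$,
\[
\tilde{\beta}_0\Bigl(\tfrac{1+\tilde{\beta}_0}{2}\Bigr)^{\lceil k/2\rceil} \;\geq\; \tilde{\beta}_0^{\lceil k/2\rceil+1} \;\geq\; (1-\tilde{\gamma}(S))^{\lceil k/2\rceil+1} \;\geq\; (1-\tilde{\gamma}(S))^{2k},
\]
which, after dividing by $\pi(S)$, is the desired bound.

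The subtlety I expect to need the most care is justifying the trace identity $\int_S \tilde{p}_S^{(n)}(x,x)\,\pi_{\mid S}(dx) = \sum_i \tilde{\beta}_i^n$ in the continuous setting. For $n \geq 2$ the operator $\tilde{P}_S^n$ is trace class (a product of two Hilbert-Schmidt factors), so the standard kernel-trace formula applies; for $n = 1$, where $\sum_i \tilde{\beta}_i$ may fail to converge, it suffices to keep only the weaker lower bound by $\tilde{\beta}_0$, which can be obtained by testing the eigenfunction identity~\eqref{ascon} against $f_0 \equiv 1$ and discarding the non-negative spectral tail.
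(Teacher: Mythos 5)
Your argument is correct and follows the same skeleton as the paper's proof --- the binomial expansion $P^m=\sum_{j=0}^m 2^{-m}\binom{m}{j}\tilde{P}^j$, the domination of $\tilde{P}^{j+1}(x,\cdot)$ by the restricted kernel $\tilde{P}_S^{j+1}(x,\cdot)$ for $x\in S$, and the eigenfunction expansion from Assumption \ref{as2} --- but it departs from it in two local yet genuine ways. First, where the paper keeps the single pointwise term $\tilde{\beta}_1^{j+1}f_1^2(x)$ and then uses $\sup_{x\in S}f_1^2(x)\ge\|f_1\|_{\pi_{\mid S}}^2=1$, you integrate the diagonal over $S$ and use the trace-type bound $\int_S\tilde{p}_S^{(n)}(x,x)\,\pi(dx)\ge\tilde{\beta}_0^{\,n}$ before passing from the $\pi_{\mid S}$-average to the supremum; this buys you the minor advantage that the domination inequality is only needed $\pi$-a.e.\ rather than pointwise on the diagonal. (Do reconcile your two statements of the trace identity: with densities taken with respect to $\pi$ the trace over $S$ is $\sum_i\tilde{\beta}_i^n$, while with respect to $\pi_{\mid S}$ it is $\pi(S)^{-1}\sum_i\tilde{\beta}_i^n$; you write both, though the final bound is unaffected.) Second, and more substantively, the paper's last line simply asserts the identity $\tilde{\beta}_1=1-\tilde{\gamma}(S)$, whereas you derive the one-sided bound $\tilde{\beta}_0\ge 1-\tilde{\gamma}(S)$ from the variational characterization in Definition \ref{d5}; since only a lower bound is needed, your route is both sufficient and actually justified, which fills a gap the paper leaves implicit. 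The one caveat, shared with the paper (whose asserted identity forces $\tilde{\gamma}(S)\le 1$), is that your final chain $\tilde{\beta}_0^{\,m+1}\ge(1-\tilde{\gamma}(S))^{m+1}\ge(1-\tilde{\gamma}(S))^{2k}$ tacitly assumes $\tilde{\gamma}(S)\le 1$: because the exponent $2k$ is even, the right-hand side of \eqref{rflo} remains positive when $1-\tilde{\gamma}(S)<0$, so that case should be excluded or treated separately.
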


Following Lemmas \ref{l2} and \ref{l3}, we can extend Theorem 1 in \cite{kozma2007precision} to the continuous state space context as Theorem \ref{t2} in this study. This represents the precision of the spectral profile bound for $ \tau_{2,\frac{1}{8}}(\frac{1}{4}; P)$.

\begin{theorem}\label{t2}
	Consider a reversible, irreducible, and exactly half-lazy Markov chain $P$ with the stationary distribution $\pi$. Assume that $\delta_{x} \tilde{P}$ is a $\beta$-warm start for every $x \in \mathcal{X}$ and $\tilde{P}$ satisfies Assumption \ref{as2}.
	Then there exists a universal constant $C$ such that
	\begin{equation}\label{mainresult}
				\tau_{2,\frac{1}{8}}(\frac{1}{4}; P) \leq \rho \leq  C(\log \lceil\log_{2} (\beta)\rceil+1) \tau_{2,\frac{1}{8}}(\frac{1}{4}; P) +108\, \lceil\log_{2} (\beta)\rceil + 7.
	\end{equation}
\end{theorem}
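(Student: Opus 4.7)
The plan is to split the theorem into its two halves. The left inequality $\tau_{2,\frac{1}{8}}(\frac{1}{4}; P) \leq \rho$ is immediate from Lemma \ref{l2} applied at $\epsilon = \frac{1}{4}$ and $\delta = \frac{1}{8}$, so the entire work lies in the upper bound on $\rho$. Write $T = \tau_{2,\frac{1}{8}}(\frac{1}{4}; P)$. When the maximum defining $\rho$ is realized by its second argument $\frac{2\log(1/\delta)}{\log 2}+1 = 7$ the right inequality is trivial, so assume $\rho = 4\int_{4/\beta}^{512} dv/(v\,\Gamma(v))$. Following Kozma's discrete strategy, I would decompose this integral dyadically along scales $v_i = 4\cdot 2^i/\beta$ for $i = 0,\ldots,N$, where $N = \lceil \log_2\beta \rceil + 7$ is chosen so that $v_N \geq 512$. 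Monotonicity of $\Gamma$ bounds each slice by $\log 2 / \Gamma(v_{i+1})$, and for each $i$ I pick a near-optimal set $A_i$ with $\pi(A_i) \leq v_{i+1}$ and $\gamma(A_i) \leq 2\Gamma(v_{i+1})$; the task reduces to bounding $\sum_i 1/\gamma(A_i)$.

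The tool for a lower bound on each $\gamma(A_i)$ is Lemma \ref{l3}, which after using $\tilde{\gamma} = 2\gamma$ gives
$$\sup_x h_{\delta_x \tilde{P},\lceil k/2\rceil}(x) \geq \frac{(1-2\gamma(A_i))^{2k}}{\pi(A_i)}.$$
To turn this into a useful lower bound I need the supremum on the left to be at most a universal constant for some $k = O(T)$. The hypothesis only supplies the $L^2$-type bound $\|h_{\delta_x \tilde{P},\lceil T/2\rceil} - 1\|_{L^2(\pi)} \leq \tfrac{1}{8}$, and upgrading this to a pointwise bound is the main obstacle I anticipate. The key identity, valid because $P$ is self-adjoint on $L^2(\pi)$ and commutes with $\tilde{P}$ (and using reversibility of $\tilde{P}$ to unfold the inner product), is
$$\|h_{\delta_x \tilde{P},j}\|_{L^2(\pi)}^2 = h_{\delta_x \tilde{P}^2,\,2j}(x),$$
which converts the $L^2$ mixing bound into $\sup_x h_{\delta_x \tilde{P}^2, T_*}(x) \leq 65/64$ for $T_* = 2\lceil T/2\rceil$. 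Because $\tilde{P}^2$ has non-negative spectrum and $P$ has spectrum in $[0,1]$, the sequence $m \mapsto h_{\delta_x \tilde{P}^2, m}(x)$ is non-increasing, so the same bound persists for every $m \geq T_*$. The identity $\tilde{P} = 2P - I$ then produces the recurrence
$$h_{\delta_x \tilde{P},m+1}(x) = \tfrac{1}{2}\bigl(h_{\delta_x \tilde{P}^2,m}(x) + h_{\delta_x \tilde{P},m}(x)\bigr),$$
which, iterated from the trivial starting bound $h_{\delta_x \tilde{P},T_*}(x) \leq \beta$ (a consequence of the warm-start hypothesis and $P$ being Markov), brings $\sup_x h_{\delta_x \tilde{P},m}(x)$ down to a universal constant after $M = O(\log\beta)$ additional steps.

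With the pointwise bound in hand at time $k_* = 2(T_* + M) = O(T + \log\beta)$, Lemma \ref{l3} plus the elementary inequality $-\log(1-x) \geq x$ yields $\gamma(A_i) \gtrsim \log(1/\pi(A_i))/k_*$ at every scale for which $\pi(A_i)$ is bounded away from $1/2$. Summing reciprocals across the $I \sim \log_2\beta$ small-measure scales is a harmonic sum of length $I$, contributing $O(k_* \log \log \beta)$; the $O(1)$ many remaining large-measure scales contribute only $O(T)$ after using the crude spectral-gap bound $\gamma^* \gtrsim (\log\beta)/T$, which follows from $\|\tilde{p}(x,\cdot)\|_{L^2(\pi)}^2 \leq \beta$ combined with exponential $L^2$ contraction along a reversible chain. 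Because $P$ is half-lazy one has $\gamma^* \leq \tfrac{1}{2}$, and hence $T \gtrsim \log\beta$, which lets me absorb the spurious $\log\beta$ inside $k_*$ back into $T$. Assembling the pieces and tracking constants produces $\rho \leq C(\log \lceil\log_2\beta\rceil + 1) T + 108\lceil\log_2\beta\rceil + 7$; the explicit constants $108$ and $7$ emerge from the additive branch of $\rho$ and from careful bookkeeping at the large-measure tail of the dyadic sum.
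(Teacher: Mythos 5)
Your overall architecture matches the paper's: the left inequality from Lemma \ref{l2}, a dyadic decomposition of $\int_{4/\beta}^{512}\frac{2\,dv}{v\Gamma(v)}$ into $O(\log_2\beta)$ scales with near-optimizing sets, Lemma \ref{l3} to turn a pointwise bound on the return density into a lower bound on each $\tilde{\gamma}(A_i)$, and a harmonic sum producing the $\log\log\beta$ factor. The divergence --- and the gap --- is in how you convert the $L^2$ hypothesis $d_2(\delta_x\tilde{P}P^{\lceil T/2\rceil},\pi)\le\frac18$ into the pointwise bound that Lemma \ref{l3} needs. The paper does this with Proposition \ref{appro1} (the standard reversibility doubling trick applied to the composite kernel $Q=\tilde{P}P^{\lceil k/2\rceil}$), which costs only a factor of $2$ in the time index. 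You in fact write down the same doubling identity, $\|h_{\delta_x\tilde{P},j}\|_{L^2(\pi)}^2=h_{\delta_x\tilde{P}^2,2j}(x)$, but then, to peel the extra factor of $\tilde{P}$ off and recover $h_{\delta_x\tilde{P},m}(x)$, you iterate the recurrence $h_{\delta_x\tilde{P},m+1}=\frac12(h_{\delta_x\tilde{P}^2,m}+h_{\delta_x\tilde{P},m})$ for $M=O(\log\beta)$ extra steps starting from the warm-start bound $\beta$. This puts the pointwise bound at time $k_*=O(T+\log\beta)$ rather than $O(T)$, so your harmonic sum yields $O\bigl((T+\log\beta)\log\log\beta\bigr)$, and the parasitic term $\log\beta\cdot\log\log\beta$ is not of the admissible form $C(\log\lceil\log_2\beta\rceil+1)T+108\lceil\log_2\beta\rceil$.

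To kill that term you assert $T\gtrsim\log\beta$, and this is where the argument genuinely fails. First, the cited justification is backwards: exponential $L^2$ contraction from a $\beta$-warm start gives $T\lesssim\log\beta/\gamma^*$, i.e.\ an \emph{upper} bound on $T$ (equivalently $\gamma^*\lesssim\log\beta/T$ only if the contraction were saturated), not the lower bound $\gamma^*\gtrsim\log\beta/T$ you invoke. Second, the claim itself is false: $\beta$ is merely \emph{some} valid warm-start constant and may grossly overestimate the actual spread of $\delta_x\tilde{P}$ (e.g.\ $\tilde{P}(x,\cdot)=\pi$ for all $x$ gives $T=O(1)$ for any declared $\beta$), and even the optimal constant $\sup_{x,y}\tilde{p}(x,y)$ does not lower-bound $T$, since a density can have a large sup-norm while being close to $1$ in $L^2(\pi)$. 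The fix is to not leave the ``doubled'' world at all: either apply Proposition \ref{appro1} to $Q=\tilde{P}P^{\lceil k/2\rceil}$ as the paper does, or rerun the eigenfunction computation in the proof of Lemma \ref{l3} directly for $\delta_x\tilde{P}^2P^{2j}$ (the extra factor of $\tilde{\beta}_1$ is harmless), so that no $O(\log\beta)$ surcharge ever enters $k_*$. The remaining steps of your outline (monotonicity of $m\mapsto h_{\delta_x\tilde{P}^2,m}(x)$, the bound $h_{\delta_x\tilde{P},m}\le\beta$, the treatment of the large-measure scales) are fine.
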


The proof of Theorem \ref{t2} is quite similar to the proof of Theorem 1 in \cite{kozma2007precision}, with the substitution of Lemma 3.1 in \cite{goel2006mixing} by our Lemma \ref{l3}.

Theorem \ref{t2} can be used to compare Markov chains in the strong $L^{2}$ metric:

\begin{corollary}\label{application}
	Consider two Markov chains, $\tilde{K}$ and $\tilde{K}'$, which are reversible, irreducible, and satisfy Assumption \ref{as2}. Let $K$ and $K'$ be their respective exactly half-lazy versions, with respect to the stationary distribution $\pi$, where $\delta_{x} \tilde{K}$ is a $\beta$-warm start for every $x$. Denote the spectral profiles of $K'$ and $K$ as $\Gamma_{K'}$ and $\Gamma_{K}$, respectively. If there exists $ 0<C_{1}<\infty$ so that      $$\Gamma_{K'}(v) \geq \frac{1}{C_{1}} \Gamma_{K}(v), \; \forall v \in [0,\infty),$$
	then there exists a universal constant $C$ such that
	$$
	\tau_{2,\frac{1}{8}}(\frac{1}{4} ; K') \leq 
	C_{1}\, C(\log \lceil\log_{2} (\beta)\rceil+1) \left(\tau_{2}(\frac{1}{8} ; \delta_{x}\tilde{K}, K) +\frac{1}{8}\right) + 108\, \lceil\log_{2}(\beta)\rceil+ 7.\\
	$$
\end{corollary}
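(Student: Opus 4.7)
The corollary is a three-link chaining argument that applies Theorem \ref{t2} to both $K$ and $K'$ and uses the profile comparison in the middle. First I would upper-bound $\tau_{2,1/8}(1/4;K')$ by the spectral-profile quantity $\rho_{K'}$ of Lemma \ref{l2}. Second I would transport the integral from $K'$ to $K$ using the hypothesis $\Gamma_{K'}\ge \Gamma_K/C_1$, obtaining $\rho_{K'}\le C_1\,\rho_K$. Third I would invoke the \emph{right} half of \eqref{mainresult} to turn $\rho_K$ into $\tau_{2,1/8}(1/4;K)$. Finally, Definition \ref{d2} lets me replace the $\delta$-approximate mixing time of $K$ by $2\sup_x\tau_2(1/8;\delta_x\tilde K,K)$ plus a universal additive constant, which yields the stated inequality after consolidating constants.

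\textbf{Execution.} For Step~1, Theorem \ref{t2} applied to $K'$ gives
\[
\tau_{2,1/8}(1/4;K')\le \rho_{K'}=\max\!\Bigl(4\!\int_{4/\beta}^{512}\frac{dv}{v\,\Gamma_{K'}(v)},\,7\Bigr),
\]
using $8/(1/4-1/8)^2=512$ and $2\log(8)/\log(2)+1=7$. For Step~2, $1/\Gamma_{K'}\le C_1/\Gamma_K$ yields $\int \tfrac{dv}{v\Gamma_{K'}}\le C_1\int \tfrac{dv}{v\Gamma_K}$; assuming $C_1\ge 1$ (otherwise the conclusion follows at once from $\rho_{K'}\le \rho_K$), pulling $C_1$ out of the $\max$ gives $\rho_{K'}\le C_1\,\rho_K$. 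For Step~3, applying \eqref{mainresult} to $K$ gives $\rho_K\le C(\log\lceil\log_{2}\beta\rceil+1)\,\tau_{2,1/8}(1/4;K)+108\lceil\log_{2}\beta\rceil+7$. For Step~4, unpacking Definition \ref{d2} shows that $d_{2,1/8}(x,K,k)\le 1/4$ is equivalent to the conjunction of $d_2(\delta_x\tilde K\,K^{\lceil k/2\rceil},\pi)\le 1/8$ (i.e.\ $\lceil k/2\rceil\ge \tau_2(1/8;\delta_x\tilde K,K)$) and the holding condition $K^{\lceil k/2\rceil}(x,\{x\})\le 1/8$; for exactly half-lazy $K$ with $\tilde K(x,\cdot)$ absolutely continuous with respect to $\pi$, the self-loop mass $K^{j}(x,\{x\})$ decays geometrically in $j$, so the holding condition is enforced once $k$ exceeds a fixed small constant independent of $x$. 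Consequently $\tau_{2,1/8}(1/4;K)\le 2\sup_x \tau_2(1/8;\delta_x\tilde K,K)+O(1)$. Chaining the four steps yields the claim after absorbing the $O(1)$ slack into the $+\tfrac{1}{8}$ buffer inside the parenthesis and into the additive term $108\lceil\log_{2}\beta\rceil+7$.

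\textbf{Main obstacle.} The substantive work lies in Step~4: one must confirm that Definition \ref{d2}'s holding-probability side condition is uniformly satisfied in $x$ once $k$ is a small universal constant, and then track constants carefully so that the $C_1$-factor produced in Step~3 in front of the additive term $108\lceil\log_{2}\beta\rceil+7$ can be absorbed back into the stated clean form (this is where the $\lceil\log_{2}\beta\rceil$ term, far larger than $\log\lceil\log_{2}\beta\rceil$, provides the needed slack). A secondary but genuine subtlety is that Theorem \ref{t2} applied to $K'$ formally demands $\delta_x\tilde K'$ to be $\beta$-warm for every $x$; this should be read into the hypothesis of the corollary, since the pointwise profile comparison $\Gamma_{K'}\ge\Gamma_K/C_1$ does not by itself transport a warm-start constant from $K$ to $K'$.
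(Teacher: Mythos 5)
Your proposal follows essentially the same route as the paper's one\nobreakdash-line proof: bound $\tau_{2,\frac{1}{8}}(\frac{1}{4};K')$ by $\rho'$, pass to $C_1\rho$ via the profile comparison, and then invoke the precision half of Theorem \ref{t2}; your Step~4 merely makes explicit the conversion from $\tau_{2,\frac{1}{8}}(\frac{1}{4};K)$ to $\sup_x\tau_{2}(\frac{1}{8};\delta_x\tilde{K},K)$ via Equation \eqref{e9}, and the resulting factor of $2$ and the $O(1)$ additive slack are legitimately absorbed into the universal constant $C$. Your two caveats are both correct readings of what the paper leaves implicit: the $\beta$-warmness of $\delta_x\tilde{K}'$ is indeed needed to apply Lemma \ref{l2} to $K'$, and the factor $C_1$ really does land on the additive term $108\,\lceil\log_{2}\beta\rceil+7$ --- but be aware that your proposed fix (absorbing that $C_1$ into the $\lceil\log_{2}\beta\rceil$ slack) cannot work for large $C_1$; the paper's own proof exhibits the identical looseness at its step (ii), so this is a defect of the corollary as stated rather than something your argument introduces.
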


\section{Proofs}\label{sec5}

\begin{proof}[\textbf{Proof of Lemma ~{\upshape\ref{l2}}}]\label{pl1}
	Conditioning on the first time a Markov chain moves,
	
	\begin{eqnarray}
		\delta_{x}P^{s}&=&\delta_{x}PP^{s-1}\nonumber\\
		&=&\delta_{x}\left[ 2^{-1}\tilde{P}+2^{-1}  I\right] P^{s-1}\nonumber\\
		&=&2^{-1}\delta_{x}\tilde{P}P^{s-1}+2^{-1} \delta_{x}P^{s-1}\nonumber\\
		&=&2^{-1}\delta_{x}\tilde{P}P^{s-1}+2^{-1}\delta_{x}PP^{s-2}\nonumber\\
		&\vdots&\nonumber
	\end{eqnarray} 
	By iteratively expanding $P = 2^{-1}\tilde{P}+2^{-1} \delta_{x}$ and collecting terms, we obtain
	\begin{equation}\label{e19}
		\delta_{x} P^{s} =  \sum_{n=1}^{s}2^{-n}\delta_{x}\tilde{P}P^{s-n} + 2^{-s} \delta_{x}.
	\end{equation} 
	Since
	\begin{eqnarray} \label{e666}
		\tau_{2,\delta}(\epsilon;P)
		&=&	\inf \left\lbrace k \in \mathbb{N} \mid \, \sup_{x \in \mathcal{X}}d_{2,\delta}(x, P,k) \leq \epsilon \right\rbrace\nonumber \\
		&= &\inf \left\{k \in \mathbb{N} \mid \, \sup_{x \in \mathcal{X}}d_{2}(\delta_{x} \tilde{P}\,P^{\lceil \frac{1}{2} k\rceil}, \pi)\leq \epsilon-\delta, \; 2^{-\lceil \frac{1}{2} k\rceil}\leq\delta \right\},
	\end{eqnarray} 
	this implies
	
	\begin{equation} \label{e9}
		\tau_{2,\delta}(\epsilon; P) \leq  \max \left(2\,\sup_{x \in \mathcal{X}}\tau_{2}(\epsilon-\delta; \delta_{x}\tilde{P}, P), \; \frac{2\log(\frac{1}{\delta})}{\log({2})} +1 \right).
	\end{equation}
	Applying Lemma \ref{l1} to the term ``$\sup_{x \in \mathcal{X}}\tau_{2}(\epsilon-\delta; \delta_{x}\tilde{P}, P)$'' in inequality \eqref{e9} completes the proof.
	
\end{proof}

\begin{proof}[\textbf{Proof of Lemma ~{\upshape\ref{l3}}}]\label{pl3}
	By Assumption \ref{as2}:
	$$\frac{d(\delta_{x}\tilde{P}_{S}^{\ell})}{d{\pi}_{\mid{S}}}(y)=\sum_{i=0}^{\infty} \tilde{\beta}_{i}^{\ell}{f}_{i}(x){f}_{i}(y), \quad \forall x, y \in {\mathcal{X}} \quad \text{and} \quad \forall \, \ell \in \mathbb{N},
	$$
	where $\tilde{\beta}_i$ and ${f}_i$ are eigenvalues and  orthonormal eigenfunctions of $\tilde{P}_{S}$, respectively.
	Letting $y=x$, we then have
	
	\begin{equation}\label{tilp}
		\frac{d(\delta_{x}\tilde{P}_{S}^{\ell})}{d{\pi}_{\mid{S}}}(x)=\sum_{i=0}^{\infty} \tilde{\beta}_{i}^{\ell}{f}_{i}^{2}(x), \quad \forall x\in \mathcal{X}.
	\end{equation}
	
	For $s,j \in \mathbb{N}_{0}$, $j \leq s $, define $b(s,j) = 2^{-s} \binom{s}{j}$. We have:
	\begin{equation}\label{eqp}
		P^{s}=\sum_{j=0}^{s} b(s,j) \tilde{P}^j.
	\end{equation}
	
	Thus, we can write $\delta_x\tilde{P}P^{\lceil \frac{1}{2}k \rceil}$ as a polynomial in $\tilde{P}$ as follows:
	
	\begin{eqnarray}\label{polyQ}
		\delta_{x}\tilde{P}P^{\lceil \frac{1}{2}k \rceil} &=&
		\delta_{x}\tilde{P}\left(\sum_{j=0}^{\lceil \frac{1}{2}k \rceil} b(\lceil \frac{1}{2}k \rceil,j) \tilde{P}^j\right) \nonumber \\
		&=&\sum_{j=0}^{\lceil \frac{1}{2}k \rceil} b(\lceil \frac{1}{2}k 
		\rceil,j)  \delta_{x}\tilde{P}^{j+1}.     
	\end{eqnarray}
	Using Equation \eqref{tilp}, we can derive that
	$\forall x \in S$ and  $k \in \mathbb{N} $
	
	\begin{eqnarray}
		\pi(S) \frac{\ d(\delta_{x}\tilde{P}P^{\lceil \frac{1}{2}k \rceil})}{ d{\pi}}(x) 
		&\stackrel{(i)} {=}&
		\frac{\ d(\delta_{x}\tilde{P}P^{\lceil \frac{1}{2}k \rceil})}{ d{\pi}_{\mid{S}}}(x) \nonumber\\ 
		&\stackrel{(ii)}=&
		\frac{d(\sum_{j=0}^{\lceil \frac{1}{2}k \rceil} b(\lceil \frac{1}{2}k \rceil,j)  \delta_{x}\tilde{P}^{j+1})}{d{\pi}_{\mid{S}}}(x)   \nonumber\\ 
		&\geq&  
		\frac{d(\sum_{j=0}^{\lceil \frac{1}{2}k \rceil} b(\lceil \frac{1}{2}k \rceil,j)  \delta_{x}\tilde{P}_{S}^{j+1})}{d{\pi}_{\mid{S}}}(x)   \nonumber\\ 
		&=&
		\sum_{j=0}^{\lceil \frac{1}{2}k \rceil}b(\lceil \frac{1}{2}k \rceil,j)\frac{d(   \delta_{x}\tilde{P}_{S}^{j+1})}{d{\pi}_{\mid{S}}}(x)   \nonumber\\
		&\stackrel{(iii)}=& \sum_{j=0}^{\lceil \frac{1}{2}k \rceil}b(\lceil \frac{1}{2}k \rceil,j) \left(\sum_{i=0}^{\infty} \tilde{\beta}_{i}^{j+1}{f}_{i}^{2}(x) \right) \nonumber \\
		&\stackrel{(iv)} \geq &  
		\sum_{j=0}^{\lceil \frac{1}{2}k \rceil}b(\lceil \frac{1}{2}k \rceil,j) \left(\tilde{\beta}_{1}^{j+1}{f}_{1}^{2}(x)\right)
		\nonumber \\
		&\geq&
		{f}_{1}^{2}(x) \tilde{\beta}_{1}^{2k}.\nonumber
	\end{eqnarray}	
	Step (i) uses ${\pi}_{\mid{S}}(B)=\frac{{\pi}(B \cap S)}{{\pi}(S)}$, $\forall B \subset \mathcal{X}$. 
	Step (ii) uses Equation \eqref{polyQ}. Step (iii) applies Equation \eqref{tilp} and
	step (iv) follows from Assumption \ref{as2} that $0\leq \tilde\beta_i \leq 1$ and the fact that ${f}_i^{2}(x)\geq 0$.
	
	By taking the supremum over $x$  
	\begin{eqnarray}
		 \sup_{x \in \mathcal{X}}{h}_{\delta_{x} \tilde{P}, \lceil k/2 \rceil}(x)
		 &\geq&
		\sup_{x \in S}{h}_{\delta_{x} \tilde{P}, \lceil k/2 \rceil}(x) \nonumber \\
		&=&\sup_{x \in S}\frac{\ d(\delta_{x}\tilde{P}P^{\lceil \frac{1}{2}k \rceil})}{ d{\pi}}(x) \nonumber \\
		&\geq& \frac{\tilde{\beta}_{1}^{2k}\sup_{x \in S}{f}_{1}^{2}(x)}{\pi(S)} \nonumber \\
		&\stackrel{(i)} \geq& \frac{\tilde{\beta}_{1}^{2k}}{\pi(S)} \nonumber \\
		&=& \frac{(1-\tilde{\gamma}(S))^{2k}}{\pi(S)}, \nonumber
	\end{eqnarray}
	where $\tilde{\gamma}(S)$ is the spectral gap of $\tilde{P}$ for the set $S$. Inequality (i) follows from the fact that $\left\| {f}_{1} \right\|_{{\pi}_{\mid{S}}}^{2}=1$.

\end{proof}

We are now equipped to prove Theorem \ref{t2}.
\begin{proof}[\textbf{Proof of Theorem ~{\upshape\ref{t2}}}] \label{pt2}  Fix $C'>0$. Consider set $A_{s}$ so that $0< \pi\left( A_{s} \right) \leq 2^{-s}$ and 
	\begin{eqnarray}\label{e90}
		\gamma(A_{s})\leq \inf \left\{\gamma(S):
		\pi(S) \leq 2^{-s}\right\}+ C',
	\end{eqnarray}
	where $\gamma(A_{s})$ is the spectral gap of $P$ for the set $A_{s}$.
	By monotonicity of $\Gamma$ and change of variables:
	\begin{eqnarray}
		\int_{4 / \beta}^{512} \frac{2\,d v}{v \Gamma(v)}&=&(2 \log 2) \int_{-9}^{\log _2 \beta-2} \frac{d u}{\Gamma\left(2^{-u}\right)} \nonumber \\
		&=&(2 \log 2)\left[\int_0^{\log _2 \beta-2} \frac{d u}{\Gamma\left(2^{-u}\right)}+\frac{9}{\Gamma(1)}\right] \nonumber\\
		&\leq&  2 \log 2\sum_{s=1}^{\lceil\log_{2} (\beta)\rceil} \frac{1}{\Gamma\left(2^{-s}\right)} +\frac{18\log 2}{\Gamma(1)}.
	\end{eqnarray}
	Using Lemma 2.2 of \cite{goel2006mixing} and the definition of the spectral profile:
	\begin{equation}\label{gamma1}
		\Gamma(1/2)\leq 2\, \Gamma(1).
	\end{equation}
	
	Hence
	\begin{eqnarray}
		\int_{4 / \beta}^{512} \frac{2\,d v}{v \Gamma(v)}
		&\leq&  2 \log 2\sum_{s=1}^{\lceil\log_{2} (\beta)\rceil} \frac{1}{\Gamma\left(2^{-s}\right)} +\frac{36\log 2}{\Gamma(1/2)} \nonumber\\
		&\leq&  27\sum_{s=1}^{\lceil\log_{2} (\beta)\rceil} \frac{1}{\Gamma\left(2^{-s}\right)}.
	\end{eqnarray}

		We then have, for all $C' > 0$ sufficiently small,
	\begin{eqnarray}\label{e4}
		\rho &\leq & 54\sum_{s=1}^{\lceil\log_{2} (\beta)\rceil} \frac{1}{\Gamma\left(2^{-s}\right)} + \frac{2\;\log(8)}{\log(2)}+1 \nonumber \\
		&\stackrel{(i)}\leq& 54 \sum_{s=1}^{\lceil\log_{2} (\beta)\rceil} \frac{1}{\Gamma\left(2^{-s}\right)} + \frac{2\;\log(8)}{\log(2)}+1 \nonumber \\
		&\stackrel{(ii)}\leq& 54\sum_{s=1}^{\lceil\log_{2} (\beta)\rceil} \frac{1}{\gamma(A_{s})-C'} + 7 \nonumber \\
		&\stackrel{(iii)}= & 108 \sum_{s=1}^{\lceil\log_{2} (\beta)\rceil} \frac{1}{\tilde{\gamma}(A_{s})-2C'} + 7.
	\end{eqnarray}
   	Step (i) follows from equation \eqref{gamma1}. Step (ii) follows from \eqref{e90}. Step (iii) applies the exactly half-lazy Definition \ref{as1}. Let $C'\rightarrow 0$,
   	and applying Lemma \ref{l3} for the set $A_{s}$ we have for any $k \in \mathbb{N}$
   	
	\begin{eqnarray}\label{reslemf}
		\sup_{x \in \mathcal{X}}{h}_{\delta_{x} \tilde{P}, \lceil k/2 \rceil}(x) 
		&\geq&
		\frac{(1-\tilde{\gamma}(A_{s}))^{2k}}{\pi(A_{s})} \nonumber \\
		&=& \frac{\exp(2k\log(1-\tilde{\gamma}(A_s))}{\pi(A_{s})}.
	\end{eqnarray}
	Imitating the steps of \cite{kozma2007precision},  we can deduce the following:   
	
	\begin{eqnarray}\label{e26}
		\tau_{2,\frac{1}{8}}(\frac{1}{4}; P) 
		&\stackrel{(i)}{\geq}& c \frac{\log \left(\pi(A_{s})\right)}{\log(1-\tilde\gamma(A_{s}))}\nonumber \\
		& \geq& c \frac{\log{(2^{-s}})}{\log(1-\tilde\gamma(A_{s}))}\label{sp} \nonumber \\
		& \stackrel{(ii)} \geq & c \frac{s\log{2}}{\frac{1}{1-\tilde\gamma(A_{s})}-1},
	\end{eqnarray}
	where $c$ denotes an absolute positive constant. Step (i) follows from inequality \eqref{reslemf}, Equation \eqref{e9}, and  Proposition \ref{appro1} in Appendix \ref{appA} for the reversible chain $Q=\tilde{P}P^{\lceil k/2 \rceil}$. 
	Step (ii) applies $\log(\frac{1}{x}) \leq \frac{1}{x}-1, \forall x>0 $.
	
	Note that $\frac{1}{1-\tilde\gamma(A_{s})}-1>0$.
	Therefore, we obtain the following using inequality \eqref{e26}
	\begin{eqnarray}
		\tau_{2,\frac{1}{8}}(\frac{1}{4}; P)\times \left( \frac{1}{1-\tilde\gamma(A_{s})}-1\right) 
		& \geq & c \,\log{2}\, s .	 \nonumber
	\end{eqnarray}
	As a result, we get
	\begin{eqnarray}\label{e31}
		\frac{1}{\tilde\gamma(A_{s})}
		\leq c' \frac{ \tau_{2,\frac{1}{8}}(\frac{1}{4}; P)}{s} + 1,
	\end{eqnarray}
	where $c'$ denotes an absolute positive constant. Finally, by combining \eqref{e4} and \eqref{e31}, we have
	
	\begin{eqnarray*}
		\rho &\leq& 108 \sum_{s=1}^{\lceil\log_{2} (\beta)\rceil} \left(  c' \frac{ \tau_{2,\frac{1}{8}}(\frac{1}{4}; P)}{s} + 1 \right)+7\\ \nonumber 
		& \stackrel{(i)}\leq & 108 \, c'(\log \lceil\log_{2} (\beta)\rceil+1) \tau_{2,\frac{1}{8}}(\frac{1}{4}; P) +108\, \lceil\log_{2} (\beta)\rceil + 7 \\ \nonumber
		&=& C(\log \lceil\log_{2} (\beta)\rceil+1) \tau_{2,\frac{1}{8}}(\frac{1}{4}; P) +108\, \lceil\log_{2} (\beta)\rceil + 7,  \nonumber
	\end{eqnarray*}
	where $C$ denotes an absolute positive constant. Inequality (i) follows by Riemann approximation. This completes the proof of Theorem \ref{t2}.
	
\end{proof}

\begin{proof}[\textbf{Proof of Corollary ~{\upshape\ref{application}}}]\label{papplication}
	
	\begin{eqnarray*}
	\tau_{2,\frac{1}{8}}(\frac{1}{4} ; K') &\leq& \rho'\\
	&\stackrel {(i)}\leq& C_{1} \rho \\
	& \stackrel{(ii)} \leq& C_{1} C(\log \lceil\log_{2} (\beta)\rceil+1) (\tau_{2}(\frac{1}{8} ; \delta_{x}\tilde{K}, K)+\frac{1}{8})+ 108\, \lceil\log_{2}(\beta)\rceil+ 7.    
\end{eqnarray*}
Step (i) follows from $\Gamma_{K'}(v) \geq \frac{1}{C_{1}} \Gamma_{K}(v), \; \forall v \in [0,\infty)$. Step (ii) follows from the precision inequality in Theorem \ref{t2}. 
	
\end{proof}

\section*{Acknowledgements}
AS is supported by the NSERC Discovery Grant Program.

\appendix

\section{Appendix}\label{appA}
	\begin{proposition}\label{appro1}
		For a reversible Markov chain with transition probability distribution $Q$ with respect to the stationary distribution $\pi$:
		$$
		\sup_{x}h_{\delta_x, 2n}(x)-1 \leq
		\sup_{ x \in \mathcal{X}}d_{2}^{2}(\delta_xQ^n, \pi),
		$$
		where  $
		h_{\delta_x,2n}(y):=\frac{d\delta_xQ^{2n}}{d\pi}(y)
		$ and $d_{2}^{2}(\delta_xQ^{n}, \pi)= \left\|h_{\delta_x, n}-1\right\|_2^{2}.$
	\end{proposition}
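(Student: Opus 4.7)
The plan is to prove the stronger identity $h_{\delta_x, 2n}(x) = \|h_{\delta_x,n}\|_{L^2(\pi)}^2$, from which the stated inequality follows immediately (in fact as an equality after taking suprema).

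First I would apply Chapman-Kolmogorov, splitting $\delta_x Q^{2n}$ into two consecutive $n$-step transitions. Since the densities exist by hypothesis, I can write $\delta_x Q^n(dz) = h_{\delta_x,n}(z)\,\pi(dz)$ and $Q^n(z,dy) = h_{\delta_z,n}(y)\,\pi(dy)$. Then, by Fubini,
$$\delta_x Q^{2n}(A) \;=\; \int_{\mathcal{X}} h_{\delta_x,n}(z)\,Q^n(z,A)\,\pi(dz) \;=\; \int_A \!\int_{\mathcal{X}} h_{\delta_x,n}(z)\,h_{\delta_z,n}(y)\,\pi(dz)\,\pi(dy),$$
which identifies the Radon-Nikodym density as $h_{\delta_x,2n}(y) = \int h_{\delta_x,n}(z)\,h_{\delta_z,n}(y)\,\pi(dz)$.

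Next, I would use the reversibility of $Q$ (hence of $Q^n$), which gives the symmetry $h_{\delta_z,n}(y) = h_{\delta_y,n}(z)$ as $\pi\otimes\pi$-a.e.\ equal densities on $\mathcal{X}\times\mathcal{X}$. Specializing the previous identity to $y = x$ yields
$$h_{\delta_x,2n}(x) \;=\; \int_{\mathcal{X}} h_{\delta_x,n}(z)^2\,\pi(dz) \;=\; \|h_{\delta_x,n}\|_{L^2(\pi)}^2.$$
Finally, since $\int h_{\delta_x,n}\,d\pi = (\delta_x Q^n)(\mathcal{X}) = 1$, expanding the squared $L^2$ distance gives
$$d_2^2(\delta_x Q^n,\pi) \;=\; \|h_{\delta_x,n}-1\|_{L^2(\pi)}^2 \;=\; \|h_{\delta_x,n}\|_{L^2(\pi)}^2 - 1 \;=\; h_{\delta_x,2n}(x) - 1.$$
Taking suprema on both sides produces the claimed inequality (as an equality).

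The main obstacle is not conceptual but is a measure-theoretic subtlety specific to the continuous setting: the pointwise evaluation $h_{\delta_x,2n}(x)$ involves reading off a Radon-Nikodym density at the starting point $x$, which is typically $\pi$-null, so a density is only defined $\pi$-a.e. The cleanest resolution is to take the identity $h_{\delta_x,2n}(x) = \|h_{\delta_x,n}\|_{L^2(\pi)}^2$ as the canonical definition of this pointwise value (consistent with the discrete case and already implicitly used in the proof of Lemma \ref{l3}), or equivalently to fix a jointly measurable version of the two-point density $(x,y)\mapsto h_{\delta_x,n}(y)$ so that the reversibility symmetry $h_{\delta_z,n}(y) = h_{\delta_y,n}(z)$ and its restriction to the diagonal are well defined.
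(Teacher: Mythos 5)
Your proposal is correct and follows essentially the same route as the paper's proof: both rest on the Chapman--Kolmogorov splitting of $Q^{2n}$ into two $n$-step transitions together with the reversibility symmetry $h_{\delta_z,n}(y)=h_{\delta_y,n}(z)$, evaluated on the diagonal $y=x$. The only difference is cosmetic --- by specializing to $y=x$ immediately you obtain the exact identity $h_{\delta_x,2n}(x)=\lVert h_{\delta_x,n}\rVert_{L^2(\pi)}^2$ and hence equality of the suprema, whereas the paper centers the densities and invokes Cauchy--Schwarz (which is an equality on the diagonal anyway); your remark on fixing a jointly measurable version of the two-point density is a fair point that the paper leaves implicit.
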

	
	\begin{proof}
		This proof is inspired by inequality (2.2) in \cite{goel2006mixing}.
		Note that $h_{\delta_x,n}(y)=h_{\delta_y,n}(x)$ when $Q$ is reversible with respect to $\pi$. 
		We have
		
		$$
		h_{\delta_x, 2n}(y):=\frac{d\delta_{x}Q^{2 n}}{d\pi}(y)= \frac{\delta_{x}Q^{2n}(dy)}{\pi(dy)}.
		$$
		Using reversibility, we then obtain
		$$
		\begin{aligned}
			\left|h_{\delta_x, 2n}(y)-1\right| &= \left|\frac{\delta_{x}Q^{2n}(dy)-\pi(dy)}{\pi(dy)}\right| \\ &=\left|\frac{\int_{z \in \mathcal{X}}\left(\delta_{x}Q^n(dz)-\pi(dz)\right)\left(\delta_{z}Q^n(dy)-\pi(dy)\right)}{\pi(dy)}\right| \\
			&=\left|\int_{z \in \mathcal{X}}\left[h_{\delta_x, n}(z)-1\right]\left[h_{\delta_y, n}(z)-1\right] \pi(dz)\right| \\
			& \leq\left\|h_{\delta_x, n}-1\right\|_2\left\|h_{\delta_y, n}-1\right\|_2,
		\end{aligned}
		$$
		where the the last inequality follows from Cauchy-Schwarz.
		Let $x=y$ and take the supremum over $x$ yields
		$$
		\begin{aligned}
			\sup_{x}h_{\delta_x, 2n}(x)-1 & \leq \sup_{x}\left\|h_{\delta_x, n}-1\right\|_2\left\|h_{\delta_x, n}-1\right\|_2 \nonumber \\ & =
			\sup_{ x \in \mathcal{X}}d_{2}^{2}(\delta_xQ^n, \pi).
		\end{aligned}
		$$
	\end{proof}
\bibliographystyle{plain}
\bibliography{bibliography}
\end{document}